 \newtheorem{thm}{Theorem}[section]
 \newtheorem{lem}[thm]{Lemma}
  \newtheorem{prop}[thm]{Proposition}
\theoremstyle{definition}
\newtheorem{defn}[thm]{Definition}
 \theoremstyle{remark}
 \newtheorem{rem}[thm]{Remark}
\newcommand{\SM}{\mathcal{S}_{M}}
\newcommand{\Ho}{\mathcal{H}}
\newcommand{\La}{\Lambda}
\DeclareMathSymbol{\leqslant}    {\mathrel}{AMSa}{"36}
\DeclareMathSymbol{\geqslant}    {\mathrel}{AMSa}{"3E}
\title{New Pseudodistances Associated with Reparametrization Invariant Seminorms}
\author{M.~Montserrat Alonso Ferrero}
\begin{document}

\maketitle

\begin{abstract}

Let us consider two compact connected and locally connected Hausdorff spaces $M$, $N$ and two continuous functions $\varphi:M\to \mathbb{R}$, $\psi:N\to \mathbb{R}$ . In this paper we introduce new pseudodistances between pairs $(M,\varphi)$ and $(N,\psi)$ associated with reparametrization invariant seminorms. We study the pseudodistance associated with the seminorm $\|\varphi\|=\max\varphi-\min\varphi$, denoted by $\delta _{\Lambda}$, and we find a sharp lower bound for it. We finish with an example where the use of this lower bound is  illustrated.

\end{abstract}

\section{Introduction}

Topological Persistence is attracting increasing  attention from the mathematical community (cf. \cite{BiFlFa}, \cite{EH} and \cite{G}). It studies the properties of a scalar function $\varphi$ (defined on a topological space $X$) which are invariant with respect to perturbations. These functions are central in many applications such as shape matching. In other words, Topological Persistence measures the \lq \lq persistence \rq \rq of topological structures (e.g. connected components) within the sublevel sets, $\{x\in X: \varphi(x) < c\}$, of a scalar field.\\
A key tool in topological persistence is the concept of natural pseudodistance (cf. \cite{DFr2} and  \cite{DFr3}). It is based on a quantitative comparison between suitable topological spaces, endowed with real-valued functions. We work with pairs $(M, \varphi)$ and $(N,\psi)$ where $M$ and $N$ are compact connected and locally connected Hausdorff spaces  which represent the shapes to be compared and  $\varphi:M\to \mathbb{R}$, $\psi:N\to \mathbb{R}$ are continuous functions which focus the shape properties we are interested in. So, our aim is \lq\lq to measure\rq\rq the difference between the pairs $(M,\varphi)$, $(N,\psi)$. The concept of natural pseudodistance  is based on the search for a homeomorphism $h:M\to N$ minimizing the change from the function $\varphi$ to the function $\psi$, that is
 $$\delta((M,\varphi),(N,\psi))= \underset{h\in \Ho(M,N)}{\inf}\max _{M}\vert(\varphi - \psi \circ h)\vert$$ where $\Ho(M,N)$ denotes the set of all homeomorphisms  between the topological spaces $M$ and $N$.

In case where the topological spaces are manifolds, the study of this concept has pointed out some interesting properties and shown that these properties could depend on the dimension of the manifolds we are considering (cf. \cite{DFr3}). This approach is also interesting for application purposes (cf. \cite{BiFlFa}) and is strictly related to Persistent Homology, another field of research that concerns the comparison of manifolds endowed with real-valued functions (cf. \cite{EH}).

Another fundamental tool in Topological Persistence is the size function associated to a pair $(M,\varphi)$. Size functions not only give us a lower bound for the natural pseudodistance, but they
are also useful in many applications for Pattern Recognition (cf. \cite{FeLoP}, \cite{UV1}, \cite{UV2} and \cite{FeFrUV}).

\vskip .4cm

While the definition of natural pseudodistance is based on the $L_\infty$ norm,  in \cite{FrLa2} it has been recently pointed out that this norm can be replaced by many other (semi)norms, $\mathcal{S}_M$, so producing different pseudodistances

$$\delta_{exo}((M,\varphi),(N,\psi))=\underset{h\in \Ho(M,N)}{\inf}\mathcal{S}_M(\varphi-\psi\circ h).$$
\noindent The only requirement is that these seminorms, $\mathcal{S}_M$, must be reparametrization invariant, i.e. invariant under composition with homeomorphisms of the considered topological spaces. The new pseudodistances we obtain when we change the seminorm will be called \lq\lq exotic pseudodistances\rq\rq to differ from the pseudodistance in the  \lq\lq natural\rq\rq case. This change of the seminorm is useful in certain cases where the natural pseudodistance does not take into account some details. For example, let us  consider $M=[0,\pi]$ and $\mathcal{S}_M(\varphi)=\max \varphi-\min \varphi$. While the value taken by  the exotic pseudodistance between the pairs $(M, \sin 2t)$ and $(M, 0)$  is $2$ and the one between $(M, \sin t)$ and $(M, 0)$  is $1$, the natural pseudodistance between the same pairs  is $1$, in both cases. Following the approach exposed in \cite{FrLa2}, after the $L_\infty$ norm, the simplest example of reparametrization invariant seminorm is the seminorm $\|\xi\|=\max\xi-\min\xi$. The contribution of this paper is to give a formal introduction of the pseudodistances associated with reparametrization invariant seminorms and to begin the study of the pseudodistance $\delta_\Lambda$ associated with the seminorm $\|\xi\|=\max\xi-\min\xi$. Our main result is a sharp lower bound for $\delta_\Lambda$, obtained by using a previous lower bound for the natural pseudodistance.

The structure of this paper is as follows.
In Section 2 we recall the theoretical background, while in Section 3 the definition of exotic pseudodistance is introduced. In Section 4 our lower bound for $\delta_\Lambda$ is proved. We finish with an example (Section 5) where all these notions are illustrated.

\section{Theoretical Background}

In this section we will review the basic notions we shall require for the sections to follow. For more details about them we refer to \cite{DFr2}, \cite{DFr3} and \cite{Fr}.

We consider pairs of the form $(M,\varphi)$ where $M$ is a non-empty compact connected and locally connected Hausdorff space and $\varphi : M\longrightarrow \mathbb R$ is a continuous function called a \emph{measuring function}. The collection of these pairs will be denoted by $Size$ and each element $(M,\varphi)$ of $Size$ will be called a $\emph{size pair}$.

Let $\mathcal{C}(M,\mathbb{R})$ be the set of real valued continuous functions on $M$ and $\mathcal{H}(M,N)$ be the (possibly empty) set of all homeomorphisms  between the topological spaces $M$ and $N$. We denote by $\mathbb{R}^{+}$ the set of real numbers greater than or equal to zero.

All the following definitions can easily be extended to cases where the topological spaces underlying the size pairs are not connected.

\subsection{Natural pseudodistance and Size Functions}

For every compact topological space $M$ we consider the functional $\Theta_{M} :\mathcal{C}(M,\mathbb{R})\longrightarrow \mathbb{R}^+$ defined by setting
$$\Theta_{M} (\xi)=\max _{p\in M}\vert \xi(p)\vert.$$

\begin{rem}
 Let $(M,\varphi)$ and $(N,\psi)$ be two size pairs with $\Ho (M,N)\neq \emptyset$. The number
$$\Theta_{M} (\varphi - \psi \circ f)=\max _{p\in M}\vert\varphi (p)-\psi\circ f (p)\vert$$  measures how much $f\in \mathcal{H}(M,N)$ changes the values taken by the measuring function.
\end{rem}

\begin{defn}
 We define the \emph{natural pseudodistance} between  $(M,\varphi)$ and $(N,\psi)$ as
$$
\delta((M,\varphi),(N,\psi))=
\begin{cases}
\underset{f\in \Ho(M,N)}{\inf}\Theta_{M} (\varphi - \psi \circ f)& \textrm{ if } \mathcal{H}(M,N)\not=\emptyset,\\
\infty & \textrm{ otherwise}.
\end{cases}
$$

\end{defn}

We can verify immediately that $\delta$ is a pseudodistance in each
equivalence class of $Size/\hspace{-0.125cm}\approx$ where we say
that two size pairs are equivalent with respect to the relation
$\approx$ if and only if the underlying topological
spaces are homeomorphic. Note that $\delta$ is not a distance, since
two different size pairs  can have vanishing distance. Furthermore,
$\delta$ is an extended pseudodistance in \emph{Size}, where the
adjective extended means that $\delta$ can take the value $\infty$.

\vskip .5cm

Before giving the notion of  size function, we fix some notation. The symbol $\bigtriangleup$ denotes the diagonal of $\mathbb{R}^2$, that is
$$\bigtriangleup =\{(x,y)\in \mathbb{R}^2~:~x=y\}$$
and $\bigtriangleup ^+$ denotes the open half-plane above $\bigtriangleup$, that is
$$\bigtriangleup ^+ =\{(x,y)\in \mathbb{R}^2~:~x<y\}.$$

\begin{defn}

For each size pair $(M,\varphi)$ and $y\in \mathbb{R}$, we say that two points $p,q\in M$ are \linebreak $\langle\varphi \leqslant y\rangle$-\emph{connected} if and only if they belong to the same connected component of the lower level set $\{\overline{p}\in M~:~\varphi (\overline{p})\leqslant y\}$. Then we define $\ell  _{(M,\varphi)}(x,y)$ for $(x,y)\in\bigtriangleup ^{+}$ as the number of equivalence classes into which the lower level set $\{\overline{p}\in M~:~\varphi (\overline{p})\leqslant x\}$ is divided by the equivalence relation of $\langle\varphi \leqslant y\rangle$-connectedness. We call

$$\ell _{(M,\varphi)}: \bigtriangleup ^{+}\longrightarrow \mathbb{N}$$
the \emph{size function associated with the size pair $(M,\varphi)$}.
\end{defn}






\subsection{Matching Distance for Size Functions} \label{seccion 2.2}

In order to define the matching distance, we begin by describing how  each size function can be represented as a set of points and lines in the real plane, with multiplicities. We start with the definition of  particular points of $\mathbb{R}\times (\mathbb{R}\cup \left\{\infty\right\})$, the cornerpoints associated with a  size function (cf. \cite{FrLa1}).
\vskip .2cm
\noindent Let $(M, \varphi)$ be a size pair.
\vskip .2cm

\begin{defn}
For every point $(x,y)\in \bigtriangleup ^{+}$ and $\varepsilon \in \mathbb{R}$ with $\varepsilon>0$ and $x+\varepsilon < y-\varepsilon$ we define the number $\mu _{\varepsilon}(x,y)$ as
$$\ell _{(M,\varphi)}(x+\varepsilon, y-\varepsilon)-\ell _{(M,\varphi)}(x-\varepsilon, y-\varepsilon)-\ell _{(M,\varphi)}(x+\varepsilon, y+\varepsilon)+\ell _{(M,\varphi)}(x-\varepsilon, y+\varepsilon).$$
The finite number
$$\mu (x,y)=\min \{\mu _{\varepsilon}(x,y)\mid \varepsilon \in \mathbb{R},\varepsilon>0\mbox{ and }x+\varepsilon < y-\varepsilon\}$$
will be called the \emph{multiplicity} of $(x,y)$ for $\ell _{(M,\varphi)}$. We call \emph{cornerpoint} for $\ell _{(M,\varphi)}$ any point $(x,y)\in \bigtriangleup ^{+}$ such that the number $\mu (x,y)$ is strictly positive.
\end{defn}

\begin{defn}
For every vertical line $r$, with equation $x=k$, and  $\varepsilon \in \mathbb{R}$ with $\varepsilon>0$ and $k+\varepsilon < \dfrac{1}{\varepsilon}$ we define the number $\mu_{\varepsilon}(r)$ as
$$\ell _{(M,\varphi)}\left(k+\varepsilon, \dfrac{1}{\varepsilon}\right)-\ell _{(M,\varphi)}\left(k-\varepsilon, \dfrac{1}{\varepsilon}\right).$$
The finite number
$$\mu (r)=\min \left\{\mu_{\varepsilon}(r)\mid \varepsilon \in \mathbb{R}, \varepsilon>0\mbox{ and   }k+\varepsilon <\dfrac{1}{\varepsilon}\right\} $$
will be called the \emph{multiplicity} of $r$ for $\ell _{(M,\varphi)}$. When this multiplicity is strictly positive we call $r$ a \emph{cornerpoint at infinity} for the  size function, and we identify $r$ with the pair $(k,\infty)$.

\end{defn}

We denote by $\bigtriangleup ^{*}$  the open  half-plane $\bigtriangleup ^{+}$ extended by the points at infinity of the kind $(k,\infty)$, with $\vert k\vert<+\infty$, and by $\overline{\bigtriangleup ^{*}}$ the closed half-plane $\overline{\bigtriangleup^{+}} $ extended by the same points at infinity.

Cornerpoints and their multiplicities are fundamental tools in Size Theory, since size functions are  completely determined by their cornerpoints and multiplicities. Indeed, we have the following representation theorem (cf. \cite{FrLa1}).
\begin{thm}
 For every $(x',y')\in \bigtriangleup ^{+}$ we have
$$\ell _{(M,\varphi)}(x',y')=\sum_{\substack{(x,y)\in \bigtriangleup ^{*}\\ x\leq x', y'<y}}\mu((x,y)).$$

\end{thm}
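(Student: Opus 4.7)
The strategy is to decompose $\ell_{(M,\varphi)}(x',y')$ as a nested telescoping sum of jumps of $\ell$, first in the $x$-direction and then in the $y$-direction, and to identify each such jump with the multiplicity of a (possibly infinite) cornerpoint lying in the region $\{(x,y)\in\bigtriangleup^{*}:x\leq x',\,y'<y\}$.

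I would begin by recording the basic structural properties of $\ell_{(M,\varphi)}$: from the definition it is non-decreasing in $x$, non-increasing in $y$, and integer-valued; by the compactness and local connectedness of $M$, every sublevel set has finitely many connected components, so $\ell$ is finite everywhere on $\bigtriangleup^{+}$; moreover, $\ell(x,y)=0$ for $x<\min\varphi$. Now fix $(x',y')\in\bigtriangleup^{+}$. The map $x\mapsto\ell(x,y')$ is a bounded non-decreasing integer-valued function, hence has finitely many points of increase $x_1<\cdots<x_n\leq x'$; writing
\[
j_i(y):=\lim_{\varepsilon\to 0^{+}}\bigl[\ell(x_i+\varepsilon,y)-\ell(x_i-\varepsilon,y)\bigr]
\]
for the horizontal jump at $x_i$ viewed as a function of $y$, a first telescoping gives $\ell(x',y')=\sum_{i=1}^{n}j_i(y')$.

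For each fixed $i$, the function $y\mapsto j_i(y)$ is itself non-increasing, integer-valued and bounded on $(y',+\infty)$, so it has finitely many jump points $y'<y^{(i)}_1<\cdots<y^{(i)}_{m_i}$ together with an asymptotic value $j_i(+\infty)\geq 0$. A second telescoping yields
\[
j_i(y')=j_i(+\infty)+\sum_{k=1}^{m_i}\bigl[j_i(y^{(i)-}_k)-j_i(y^{(i)+}_k)\bigr].
\]
The identifications to be made are: each inner jump $j_i(y^{(i)-}_k)-j_i(y^{(i)+}_k)$ coincides with the multiplicity $\mu(x_i,y^{(i)}_k)$ defined by the $\varepsilon$-square formula, and $j_i(+\infty)$ equals the multiplicity $\mu(r_i)$ of the vertical line $r_i:x=x_i$ at infinity (contributing $0$ unless $(x_i,\infty)$ really is a cornerpoint at infinity). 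Conversely, any cornerpoint $(x,y)\in\bigtriangleup^{*}$ with $x\leq x'$ and $y'<y$ must have abscissa equal to some $x_i$, because at an abscissa where $x\mapsto\ell(x,y')$ is locally constant, $\mu_{\varepsilon}$ vanishes for all small $\varepsilon$. Stitching the two telescopings together and reindexing over cornerpoints produces the claimed formula.

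The main obstacle I anticipate is the verification that the two-dimensional finite difference $\mu_{\varepsilon}(x_i,y^{(i)}_k)$ really does coincide with the iterated horizontal-then-vertical jump of $\ell$ computed above. This reduces to checking that for $\varepsilon$ small enough the four corners of the $\varepsilon$-square around $(x_i,y^{(i)}_k)$ sit in the appropriate constancy regions of $\ell$, which is an elementary consequence of the monotonicity of $\ell$ together with the local finiteness of its discontinuities; an analogous thin-strip argument, where the top side of the square is pushed to infinity, justifies the identification $j_i(+\infty)=\mu(r_i)$.
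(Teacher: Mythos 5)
The paper does not prove this theorem: it is recalled verbatim from \cite{FrLa1}, so there is no internal proof to compare against. Your plan is essentially the classical argument from that source (telescoping the size function first in $x$, then in $y$, and matching jumps with multiplicities), and most of it is sound, but there is one genuine gap that your sketch never touches and that cannot be absorbed into the ``constancy regions'' remark at the end.

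The gap is the boundary of the summation region. The theorem sums over $x\leq x'$ but $y'<y$, and this asymmetry is not a convention: it is exactly equivalent to the statement that $\ell_{(M,\varphi)}$ is right-continuous in $x$ and right-continuous in $y$. Your first telescoping, $\ell(x',y')=\sum_i j_i(y')$ with two-sided jumps $j_i$, actually computes the \emph{right} limit $\lim_{\varepsilon\to 0^+}\ell(x'+\varepsilon,y')$; if $x'$ is itself a point of increase, identifying this with $\ell(x',y')$ (and hence including cornerpoints of abscissa exactly $x'$ in the sum) requires right-continuity in $x$. Symmetrically, your second telescoping recovers $j_i(y'^+)$, and equating it with $j_i(y')$ while \emph{excluding} cornerpoints of ordinate exactly $y'$ requires right-continuity in $y$. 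Neither continuity statement is formal: both need the compactness/local-connectedness argument showing that the components of $\{\varphi\leq y\}$ meeting $\{\varphi\leq x\}$ stabilize as the parameters decrease to their limits. Without these lemmas your two telescopings simply do not determine which of the four possible half-open conventions appears in the formula, so the stated identity is not yet proved. A second, smaller omission: you assert that $y\mapsto j_i(y)$ is non-increasing, which is the monotonicity of the increment $\ell(x_2,y)-\ell(x_1,y)$ in $y$; this is true and standard, but it is a lemma about how classes merge, not a formal consequence of the separate monotonicities of $\ell$ in $x$ and $y$, and it is also what you need for your converse claim that every cornerpoint in the region has abscissa among the $x_i$. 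The remaining issue you flag yourself (that the $\varepsilon$-square difference stabilizes to the iterated jump) is indeed handled by local finiteness of cornerpoints away from the diagonal, and the region $\{x\leq x',\ y>y'\}$ is safely bounded away from $\bigtriangleup$, so that part of the plan is fine.
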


We recall some definitions.
\begin{defn}
Let $\ell $ be a  size function. We call \emph{representative sequence} for $\ell$ any sequence of points $(a_{n})_{n\in \mathbb{N}}$ in $\overline{\bigtriangleup^{*}}$ with the following properties:
\begin{itemize}
\item [i)] $a_{0}$ is the cornerpoint at infinity for $\ell $;
\item[ii)] for each $n>0$, either $a_{n}$ is a proper cornerpoint for $\ell $, or $a_{n}$ belongs to $\bigtriangleup$;
\item[iii)] if $(x,y)$ is a proper cornerpoint for $\ell $ with multiplicity $\mu (x,y)$, then the cardinality of the set $\{n\in \mathbb{N}:a_{n}=(x,y)\}$ is equal to $\mu (x,y)$;
\item[iv)] the set of indexes for which $a_{n}$ belongs to $\bigtriangleup$ is countably infinite.
\end{itemize}

\end{defn}

Now, we define a pseudometric in $\overline{\bigtriangleup^{*}}$ that will give rise to a distance between  size functions.

\begin{defn}
We define the function  $d:\overline{\bigtriangleup^{*}} \times \overline{\bigtriangleup^{*}}\longrightarrow \mathbb{R}^{+}$  by

$$d((x,y),(x',y'))=\min \left\{\max \{\vert x-x'\vert,\vert y-y'\vert\},\max\left\{\dfrac{y-x}{2},\dfrac{y'-x'}{2}\right\}\right\}$$

\noindent where conventions regarding $\infty$ are: $\infty -y=y-\infty=\infty$ for $y\neq \infty$, $\infty -\infty=0$, $\dfrac{\infty}{2}=\infty$, $\vert \infty\vert=\infty$, $\min\{\infty,c\}=c$, $\max \{\infty, c\}=\infty$.
\end{defn}

The function $d$ is a pseudodistance on $\overline{\bigtriangleup^{*}}$, it measures the smaller between the cost of moving $(x,y)$ to $(x',y')$, and the cost of moving $(x,y)$ and $(x',y')$ onto the diagonal. The costs are computed by using the distance induced by the max-norm.

\begin{defn}
 Let $(a_{n})$ and $(b_{n})$ be two representative sequences for two size functions $\ell _{1}$ and $\ell _{2}$, respectively. The \emph{matching distance} between $\ell _{1}$ and $\ell _{2}$ is the number
$$d_{match}(\ell _{1},\ell _{2}):=\inf _{\sigma}\sup _{n} d(a_{n},b_{\sigma (n)}),$$
where $n$ varies in $\mathbb{N}$ and $\sigma$ varies among all the bijections from $\mathbb{N}$ to $\mathbb{N}$.
\end{defn}

\begin{rem}
It is easy to see that this definition is independent from the choice of the representative sequences of points for the  size functions $\ell _{1}$ and $\ell _{2}$. The $\inf$ and the $\sup$ in the definition of matching distance are actually attained, that is
$$d_{match}(\ell _{1},\ell _{2}):=\min _{\sigma}\max _{n} d(a_{n},b_{\sigma (n)}).$$
\end{rem}

Now we recall an important result concerning a  lower bound for the natural pseudodistance using the matching distance (cf. \cite{dAFrLa}).

\begin{thm}\label{desigualnatural}
 Let $(M,\varphi)$ and $(N,\psi)$ be two size pairs. Then
$$d_{match}(\ell _{(M,\varphi)}\ell _{(N,\psi)})\le\delta ((M,\varphi),(N,\psi)).$$
\end{thm}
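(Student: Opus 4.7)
If $\Ho(M,N)=\emptyset$, then $\delta((M,\varphi),(N,\psi))=\infty$ and the inequality is vacuous, so the plan is to work under the assumption $\Ho(M,N)\neq\emptyset$ and to establish the stronger pointwise bound
$$d_{match}(\ell_{(M,\varphi)},\ell_{(N,\psi)})\le \Theta_{M}(\varphi-\psi\circ h)$$
for every $h\in\Ho(M,N)$. Taking the infimum over $h$ on the right-hand side would then give the theorem.

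Fix such an $h$ and set $\varepsilon:=\Theta_{M}(\varphi-\psi\circ h)$. The first reduction I would use is that a homeomorphism does not alter the size function: $h$ induces a bijection between the connected components of $\{\psi\le x\}$ and those of $\{\psi\circ h\le x\}$ for every $x$, hence $\ell_{(N,\psi)}=\ell_{(M,\psi\circ h)}$. This reduces the problem to comparing two size functions on the \emph{same} space whose measuring functions differ by at most $\varepsilon$ in the sup norm. From $\varphi-\varepsilon\le\psi\circ h\le\varphi+\varepsilon$ I get, for every $t\in\mathbb{R}$,
$$\{\varphi\le t-\varepsilon\}\subseteq\{\psi\circ h\le t\}\subseteq\{\varphi\le t+\varepsilon\}.$$
A naturality argument for the equivalence relation of $\langle\cdot\leqslant\cdot\rangle$-connectedness (a component of a smaller sublevel sits inside a component of the bigger one, and enlarging the equivalence level can only identify classes) then turns these inclusions into the $\varepsilon$-interleaving
$$\ell_{(M,\varphi)}(x-\varepsilon,y+\varepsilon)\le \ell_{(M,\psi\circ h)}(x,y)\le \ell_{(M,\varphi)}(x+\varepsilon,y-\varepsilon),$$
valid whenever the arguments lie in $\bigtriangleup^{+}$.

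What remains is to convert the interleaving into an explicit bijection $\sigma:\mathbb{N}\to\mathbb{N}$ of representative sequences with $d(a_n,b_{\sigma(n)})\le\varepsilon$. Plugging the interleaving into the definition of $\mu_{\varepsilon}(x,y)$ and summing via the representation theorem, I would show that every proper cornerpoint $(x,y)$ of $\ell_{(M,\varphi)}$ with $y-x>2\varepsilon$ is shadowed, counted with multiplicity, by cornerpoints of $\ell_{(M,\psi\circ h)}$ inside the closed $\max$-norm square of radius $\varepsilon$ centred at $(x,y)$, and symmetrically. Proper cornerpoints with $y-x\le 2\varepsilon$ can instead be paired with diagonal points at $d$-cost $(y-x)/2\le\varepsilon$; and, since $M$ and $N$ are connected, each size function has a single cornerpoint at infinity $(k,\infty)$, and the two minima differ by at most $\varepsilon$, so pairing them yields $d$-cost $\le\varepsilon$ as well. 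The actual matching $\sigma$ would be produced by a Hall-type argument on the bipartite graph whose vertices are the (multiplicity-enumerated) points of the two representative sequences and whose edges connect pairs at $d$-distance at most $\varepsilon$; the interleaving inequalities guarantee Hall's condition in every finite subgraph, and a compactness step in $\overline{\bigtriangleup^{*}}$ lifts this to a global bijection.

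The main obstacle is precisely this last combinatorial step: the interleaving is inherently \emph{local}, while a matching is a \emph{global} object. One must ensure that proper cornerpoints are never overused and that the freedom to route mass onto $\bigtriangleup$ is distributed consistently with multiplicities, simultaneously above and below the diagonal strip of width $2\varepsilon$. This is the technical heart of the bottleneck stability result and is where I would cite, and adapt, the Hall/flow construction developed in the Size Theory literature \cite{dAFrLa}.
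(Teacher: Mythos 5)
The paper does not actually prove this statement: Theorem \ref{desigualnatural} is recalled from \cite{dAFrLa} and used as a black box, so there is no internal proof to compare yours against. Measured against the argument in that reference, your outline follows the standard strategy: fix $h\in\Ho(M,N)$, set $\varepsilon=\Theta_M(\varphi-\psi\circ h)$, use reparametrization invariance to replace $\ell_{(N,\psi)}$ by $\ell_{(M,\psi\circ h)}$, derive the interleaving $\ell_{(M,\varphi)}(x-\varepsilon,y+\varepsilon)\le\ell_{(M,\psi\circ h)}(x,y)\le\ell_{(M,\varphi)}(x+\varepsilon,y-\varepsilon)$ from the sublevel-set inclusions, and then produce a matching of cost at most $\varepsilon$. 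This first half is correct: the inclusion $\{\psi\circ h\le y\}\subseteq\{\varphi\le y+\varepsilon\}$ gives a well-defined surjection from the relevant classes of the one quotient onto those of the other, which yields exactly the stated inequalities; and your handling of the single cornerpoint at infinity (cost $|\min\varphi-\min\psi\circ h|\le\varepsilon$) and of cornerpoints with $y-x\le 2\varepsilon$ (sent to $\bigtriangleup$ at cost $(y-x)/2$) is right.

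The gap is the one you name yourself: converting the local interleaving inequalities into a global bijection $\sigma$ with $\sup_n d(a_n,b_{\sigma(n)})\le\varepsilon$ is the entire technical content of the theorem, and you do not carry it out. A Hall-type argument does work, but your sketch glosses over the points that make it nontrivial: one needs the local finiteness of cornerpoints away from $\bigtriangleup$ (they can accumulate only on the diagonal) to make the bipartite graph locally finite so that Hall plus compactness applies, and the counting estimate that every closed max-norm square of radius $\varepsilon$ about a cornerpoint of one size function contains, with multiplicity, enough cornerpoints of the other requires the full inclusion--exclusion expression defining $\mu_\varepsilon(x,y)$ together with the representation theorem, not merely the monotone interleaving. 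Since the paper itself only cites this result, deferring the hard step to the same citation is defensible in context; but as a self-contained proof, your proposal stops exactly where the real work begins.
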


\section{Exotic Pseudodistance}

In this section we introduce a new family of pseudodistances associated with other seminorms. The fundamental property of the seminorms that we take into account is an invariance property for homeomorphisms. We will see that in certain cases where the natural pseudosistance does not distinguish two size pairs, these new pseudodistances allow us to compare them in a sharper way, so better quantifying their  differences (see example in Section \ref{example}).

\vskip .3cm

Let us assume that for every compact topological space $M$  a functional
$\mathcal{S}_{M}: \mathcal{C}(M,\mathbb{R})\longrightarrow
\mathbb{R}^+$ is given verifying the following  properties:
\begin{description}
\item [i)] $\mathcal{S}_{M}(\varphi)\geqslant 0$ for all $\varphi \in \mathcal{C}(M,\mathbb{R})$.
 \item[ii)] $\mathcal{S}_{M}(\lambda \varphi)=\vert\lambda\vert\cdot \mathcal{S}_{M}(\varphi)$ for all $\lambda \in \mathbb{R}$ and $\varphi \in \mathcal{C}(M,\mathbb{R})$.
\item[iii)] $\mathcal{S}_{M}(\varphi _{1}+\varphi _{2})\leq \mathcal{S}_{M}(\varphi_{1})+\mathcal{S}_{M}(\varphi_{2})$ for all $\varphi _{1}, \varphi_{2} \in \mathcal{C}(M,\mathbb{R})$.
\end{description}

So this functional, $\mathcal{S}_{M}$, is a seminorm in $\mathcal{C}(M,\mathbb{R})$. Suppose this seminorm verifies the following invariance property for homeomorphisms:
\begin{description}
\item[iv)] if $N$ is a  topological space, then $\mathcal{S}_{M}(\varphi)=\mathcal{S}_{N}(\varphi \circ h^{-1})$ for all $\varphi  \in \mathcal{C}(M,\mathbb{R})$ and $h\in \mathcal{H}(M,N)$.
\end{description}

\begin{prop}
 The setting
$$
\delta_{exo}((M,\varphi),(N,\psi))=
\begin{cases}
\underset{h\in \Ho(M,N)}{\inf}\mathcal{S}_M(\varphi-\psi\circ h)& \textrm{ if } \mathcal{H}(M,N)\not=\emptyset,\\
\infty & \textrm{ otherwise}.
\end{cases}
$$
defines an extended pseudodistance in \emph{Size} and a pseudodistance in each equivalence class of $Size/\hspace{-0.125cm}\approx$.

\end{prop}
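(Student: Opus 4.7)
The plan is to check the three defining properties of a pseudodistance, namely nonnegativity (with the value zero on the diagonal), symmetry and the triangle inequality, using only axioms (i)--(iv) for the seminorm $\mathcal{S}_M$. Nonnegativity is immediate from (i), and taking $h=\operatorname{id}_M$ combined with (ii) (applied with $\lambda=0$) gives $\delta_{exo}((M,\varphi),(M,\varphi))\leq \mathcal{S}_M(\varphi-\varphi)=0$.

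For symmetry, I would show that the two sets $\{\mathcal{S}_M(\varphi-\psi\circ h)\mid h\in\mathcal{H}(M,N)\}$ and $\{\mathcal{S}_N(\psi-\varphi\circ g)\mid g\in\mathcal{H}(N,M)\}$ coincide via the bijection $h\mapsto h^{-1}$. Indeed, using (iv) with the homeomorphism $h$ and then (ii) with $\lambda=-1$,
\[
\mathcal{S}_N(\psi-\varphi\circ h^{-1})=\mathcal{S}_M\bigl((\psi-\varphi\circ h^{-1})\circ h\bigr)=\mathcal{S}_M(\psi\circ h-\varphi)=\mathcal{S}_M(\varphi-\psi\circ h).
\]
Taking infima on both sides gives $\delta_{exo}((M,\varphi),(N,\psi))=\delta_{exo}((N,\psi),(M,\varphi))$, including the case when $\mathcal{H}(M,N)=\emptyset$ (which is equivalent to $\mathcal{H}(N,M)=\emptyset$ and so both sides equal $\infty$).

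For the triangle inequality, given a third size pair $(P,\chi)$ together with $h\in\mathcal{H}(M,N)$ and $k\in\mathcal{H}(N,P)$, the composition $k\circ h$ lies in $\mathcal{H}(M,P)$. I would write
\[
\varphi-\chi\circ(k\circ h)=(\varphi-\psi\circ h)+(\psi\circ h-\chi\circ k\circ h)=(\varphi-\psi\circ h)+(\psi-\chi\circ k)\circ h,
\]
apply subadditivity (iii) to split the seminorm, and then use invariance (iv) to rewrite $\mathcal{S}_M((\psi-\chi\circ k)\circ h)=\mathcal{S}_N(\psi-\chi\circ k)$. This yields
\[
\mathcal{S}_M(\varphi-\chi\circ(k\circ h))\leq \mathcal{S}_M(\varphi-\psi\circ h)+\mathcal{S}_N(\psi-\chi\circ k),
\]
and taking the infimum over $h$ and $k$ independently on the right, and bounding the left by the infimum over all homeomorphisms of $\mathcal{H}(M,P)$ (which contains at least the compositions $k\circ h$), produces the desired inequality. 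The extended case is handled by the usual convention that any empty $\mathcal{H}$ forces the corresponding summand on the right to be $\infty$, making the inequality vacuous.

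The only mildly delicate point is justifying symmetry and the triangle inequality when some of the homeomorphism sets are empty, but this is an easy bookkeeping exercise using the conventions $\infty+c=\infty$ and $\inf\emptyset=\infty$. Once the three properties are checked, restricting to an equivalence class of $Size/\!\approx$ (where $\mathcal{H}(M,N)\neq\emptyset$ by definition) automatically removes the value $\infty$, so $\delta_{exo}$ becomes a genuine pseudodistance there. I do not anticipate a real obstacle; the proof is essentially a direct transcription of the axioms, with (iv) playing the key role in converting compositions with homeomorphisms into seminorm values over the appropriate space.
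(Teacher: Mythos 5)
Your proposal is correct and follows essentially the same route as the paper: nonnegativity from (i), symmetry via the bijection $h\mapsto h^{-1}$ combined with (ii) and (iv), and the triangle inequality via the decomposition $\varphi-\chi\circ(k\circ h)=(\varphi-\psi\circ h)+(\psi-\chi\circ k)\circ h$ together with (iii) and (iv). The only differences are cosmetic (you apply (iv) in the direction $N\to M$ for symmetry where the paper goes $M\to N$, and you spell out the empty-set bookkeeping that the paper leaves implicit).
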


\begin{proof}
 Obviously $\delta_{exo}((M,\varphi),(M,\varphi))=0$.\\
Since  $\SM(\varphi-\psi \circ h)\geqslant 0$ for all $h\in \mathcal{H}(M,N)$, then  $\delta_{exo}((M,\varphi),(N,\psi))\geqslant 0$.\\
The symmetry can be deduced, using \textbf{ii)} and \textbf{iv)}, from the equalities
$$\SM(\varphi-\psi \circ h)=\mathcal{S}_{N}(\varphi\circ h^{-1}-\psi)=\mathcal{S}_{N}(\psi-\varphi\circ h^{-1})$$
which hold for all $h\in \mathcal{H}(M,N)$.

Finally, the triangle inequality
$$
\delta_{exo}((M,\varphi),(T,\xi)) \le \delta_{exo}((M,\varphi),(N,\psi)) + \delta_{exo}((N,\psi),(T,\xi))
$$

\noindent follows from the property $\textbf{iii)}$ and \textbf{iv)} of $\SM$. Indeed, for all $h\in \mathcal{H}(M,N)$ and $g\in \mathcal{H}(N,T)$, we have that:
\begin{align*}
\SM(\varphi-\xi\circ g \circ h)& = \SM(\varphi-\psi\circ h+\psi\circ h-\xi\circ g\circ h)\\
                           & \le \SM(\varphi-\psi\circ h) +\SM(\psi\circ h-\xi\circ g\circ h)\\
                           & = \SM(\varphi-\psi\circ h) +\mathcal{S}_N(\psi-\xi\circ g).
\end{align*}

\noindent To conclude, we note that

$$\inf_{\shortstack{$\scriptstyle h\in \Ho(M,N)$\\
$\scriptstyle g\in \Ho(N,T)$}}\SM(\varphi-\xi\circ g \circ h)= \underset{l\in \Ho(M,T)}{\inf}\SM(\varphi-\xi\circ l)\\
= \delta_{exo}((M,\varphi),(T,\xi))$$

$$
\inf_{\shortstack{$\scriptstyle h\in \Ho(M,N)$\\
$\scriptstyle g\in \Ho(N,T)$}}(\SM(\varphi-\psi\circ h) +\mathcal{S}_N(\psi-\xi\circ g))=\delta_{exo}((M,\varphi),(N,\psi)) + \delta_{exo}((N,\psi),(T,\xi)).
$$


\end{proof}

\begin{defn}
We shall call \emph{exotic pseudodistance} associated with the family of seminorms $\{\SM\}$ the function
$$\delta_{exo}:Size\times Size\longrightarrow \mathbb R\cup \lbrace\infty\rbrace$$
so defined
$$
\delta_{exo}((M,\varphi),(N,\psi))=
\begin{cases}
\underset{h\in \Ho(M,N)}{\inf}\mathcal{S}_M(\varphi-\psi\circ h)& \textrm{ if } \mathcal{H}(M,N)\not=\emptyset,\\
\infty & \textrm{ otherwise}.
\end{cases}
$$
\end{defn}

\vskip.4cm
More precisely, we should denote $\delta_{exo}$ by $\delta_{exo}^{\{\SM\}}$ as this pseudodistance depends on the family of seminors in question, whereby $\{\SM\}$ is omitted for the sake of simplicity.
\vskip.5cm
In the remainder of this paper, if the  functional
$\mathcal{S}_{M}: \mathcal{C}(M,\mathbb{R})\longrightarrow
\mathbb{R}$ is defined by
$\mathcal{S}_{M}(\varphi)=\max\varphi-\min\varphi$,
we shall denote by $\delta_\Lambda$ the exotic pseudodistance associated with this family of seminorms.
\vskip.5cm

\begin{rem}
To compare two size pairs $(M,\varphi),(N,\psi)$ we can choose different reparametrization invariant (semi)norms depending on the sets $M$ and $N$ and the properties we want to emphasize. A first approach has been studied in the one dimensional case in \cite{FrLa2}.
\end{rem}

\section{Lower bound for the exotic pseudodistance $\delta_\Lambda$}

The computation of $\delta_\Lambda$ involves all the homeomorphisms between two topological spaces, so it is difficult to compute. Hence we need to find a way to obtain information on the exotic pseudodistance in order to compare two topological spaces. In the natural case, that is, when $\delta ((M,\varphi),(N,\psi))=\inf _{h\in \Ho(M,N)} \max _{p\in M} \vert\varphi(p)-\nolinebreak \psi\circ h(p)\vert$, we use size functions to obtain information on the natural pseudodistance, see e.g. \cite{DFr1}. In the exotic case we introduce the following construction.

\vskip 0.5cm

 Let $(M, \varphi)$ and $(N,\psi)$ be  two size pairs. Consider the product spaces $M\times M$ and $N\times N$, and  the measuring functions  $\Phi :M\times M \longrightarrow \mathbb R$ and $\Psi :N\times N \longrightarrow \mathbb R$  defined by $\Phi (p,q)=\varphi (p)-\varphi(q)$ and $\Psi (p,q)=\psi (p)-\psi(q)$, respectively. Let $\ell _{(M\times M, \Phi)}:\bigtriangleup ^+\longrightarrow \mathbb N$ and  $\ell _{(N\times N, \Psi)}:\bigtriangleup ^+\longrightarrow \mathbb N $ be the associated \emph{size functions}.

\vskip 0.5cm

The matching distance associated with a size function  provides an easily computable lower bound for the natural pseudodistance (see Section \ref{seccion 2.2}).  In the following we introduce our main result, providing a sharp lower bound for the exotic pseudodistance between two size pairs $(M, \varphi)$ and $(N,\psi)$ in terms of the matching distance of the product spaces $(M\times M, \Phi)$ and $(N\times N, \Psi)$. To do so, we need the next lemma.

\begin{lem}\label{lema1}
Let $(M,\varphi)$ and $(N,\psi)$ be two size pairs and $(M\times M, \Phi)$ and $(N\times N, \Psi)$ be the size pairs of the product spaces. We have the following inequality
$$\delta ((M\times M, \Phi),(N\times N, \Psi))\leq \delta _{\Lambda}((M,\varphi),(N,\psi)).$$
\end{lem}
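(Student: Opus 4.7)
The plan is to reduce both sides to a comparison over homeomorphisms of the form $H=h\times h$, and to recognize that for such $H$ the $L_\infty$ cost on the product space is exactly the seminorm $\max-\min$ on the base.

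First I would dispose of the trivial case: if $\mathcal{H}(M,N)=\emptyset$ then $\delta_\Lambda((M,\varphi),(N,\psi))=\infty$ and there is nothing to prove. Otherwise, fix an arbitrary $h\in\mathcal{H}(M,N)$ and define $H:M\times M\to N\times N$ by $H(p,q)=(h(p),h(q))$. Clearly $H$ is a homeomorphism, so $\mathcal{H}(M\times M, N\times N)\neq\emptyset$ and $H$ is a legitimate candidate in the infimum defining $\delta((M\times M,\Phi),(N\times N,\Psi))$.

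The central computation is to expand $\Phi-\Psi\circ H$. Setting $\xi:=\varphi-\psi\circ h\in\mathcal{C}(M,\mathbb{R})$, we have for every $(p,q)\in M\times M$
\begin{align*}
(\Phi-\Psi\circ H)(p,q) &= \varphi(p)-\varphi(q)-\psi(h(p))+\psi(h(q)) \\
                        &= \xi(p)-\xi(q).
\end{align*}
Consequently,
\[
\Theta_{M\times M}(\Phi-\Psi\circ H)=\max_{(p,q)\in M\times M}|\xi(p)-\xi(q)|=\max_{M}\xi-\min_{M}\xi=\mathcal{S}_M(\varphi-\psi\circ h).
\]

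The conclusion is then immediate: since the $H$ of the form $h\times h$ form a (nonempty) subset of $\mathcal{H}(M\times M,N\times N)$, the infimum in $\delta((M\times M,\Phi),(N\times N,\Psi))$ is bounded above by the infimum taken only over these product homeomorphisms, namely
\[
\delta((M\times M,\Phi),(N\times N,\Psi))\leq \inf_{h\in\mathcal{H}(M,N)}\mathcal{S}_M(\varphi-\psi\circ h)=\delta_\Lambda((M,\varphi),(N,\psi)).
\]

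There is no real obstacle here; the whole content of the lemma is the algebraic identity $\Phi(p,q)-\Psi(h(p),h(q))=\xi(p)-\xi(q)$, together with the observation that $\max_{p,q}|\xi(p)-\xi(q)|$ equals $\max\xi-\min\xi$. The mild subtlety worth flagging in writing is that $\delta$ on the product pairs infimizes over all homeomorphisms of $M\times M$ onto $N\times N$, not only those split as $h\times h$, which is precisely why we get an inequality (possibly strict) rather than an equality.
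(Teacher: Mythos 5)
Your proposal is correct and follows essentially the same route as the paper: both construct the product homeomorphism $H=(h,h)$ and exploit the identity $\Phi(p,q)-\Psi(h(p),h(q))=\xi(p)-\xi(q)$ with $\xi=\varphi-\psi\circ h$, so that the sup-norm on the product equals $\max\xi-\min\xi$. The only cosmetic difference is that the paper runs the argument through an $\varepsilon$-almost-optimal $\overline{h}$ and lets $\varepsilon\to 0$, whereas you take the infimum over all $h$ directly, which is slightly cleaner but not a different idea.
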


\begin{proof}
Set $d _{\La}=\delta _{\La}((M,\varphi),(N,\psi))$. The thesis we want to prove is that
\begin{equation}\label{natural}
  \underset{H\in \Ho(M\times M,N\times N)}{\inf}\left(\underset{(p,q)\in M\times M}{\max}|\Phi(p,q)-\Psi\circ H(p,q)|\right)\le d_{\La}.
 \end{equation}

\noindent By definition of $d_{\La}$, for every real number $\varepsilon>0$ there is some $\overline{h}\in \Ho(M,N)$ such that $$d_{\La}\le \underset{p\in M}{\max}(\varphi(p)-\psi(\overline{h}(p)))-\underset{p\in M}{\min}(\varphi(p)-\psi(\overline{h}(p)))\le d_{\La}+\varepsilon.$$

In order to prove our thesis we bound $|\Phi(p,q)-\Psi\circ H(p,q)|$ in the following way. Consider the homeomorphism $\overline{H}\in \Ho(M\times M, N\times N)$ given by $\overline{H}=(\overline{h},\overline{h})$. Then, for all $(p,q)\in M\times M$
 \begin{align*}
 |\Phi(p,q)-\Psi\circ \overline{H}(p,q)|&=|\Phi(p,q)-(\Psi\circ (\overline{h},\overline{h}))(p,q)|\\
                             &=|\varphi(p)-\varphi(q)-(\psi(\overline{h}(p))-\psi(\overline{h}(q)))| \\
                             &=|(\varphi(p)-\psi(\overline{h}(p))-(\varphi(q)-\psi(\overline{h}(q)))|\\
                             &\le \underset{p'\in M}{\max}(\varphi(p')-\psi(\overline{h}(p')))-\underset{p'\in M}{\min}(\varphi(p')-\psi(\overline{h}(p')))\\
                             & \le d_{\La}+\varepsilon
 \end{align*}

\noindent Therefore, for all $\varepsilon >  0$
$$\max _{(p,q)\in M\times M}\vert\Phi(p,q)-\Psi \circ \overline{H}(p,q)\vert\leq d_{\La}+\varepsilon$$
so that
$$\inf_{H\in \Ho(M\times M, N\times N)}\max_{(p,q)\in M\times M}\vert\Phi (p,q)-\Psi \circ H (p,q)\vert\leq d_{\La}+\varepsilon$$

\noindent for all $\varepsilon > 0$, and  (1) is proved.

\end{proof}

\begin{rem}
If we consider the following subset of $\Ho(M\times M, N\times N)$

$$\mathcal{K}=\{(h,h):M\times M\longrightarrow N\times N \mbox{ for all }h\in \Ho(M,N)\}$$

\noindent the proof of Lemma \ref{lema1} shows that

$$\delta _{\La}((M,\varphi),(N,\psi))=\inf_{H\in \mathcal{K}}\max_{(p,q)\in M\times M}\vert\Phi (p,q)-\Psi \circ H (p,q)\vert.$$

\noindent Note that this equality immediately follows from the definition
\begin{equation*}
 \delta _{\La}((M,\varphi),(N,\psi))=\inf_{h\in \Ho(M,N)}(\max_{M}(\varphi-\psi\circ h)-\min_{M}(\varphi-\psi\circ h))
\end{equation*}
and the identity
\begin{equation*}
\inf_{h\in \Ho(M,N)}\max_{M\times M}\vert\varphi(p)-\psi(h(p))-(\varphi(q)-\psi(h(q)))\vert =\inf_{(H)\in \mathcal{K}}\max_{M\times M}\vert\Phi (p,q)-\Psi \circ H (p,q)\vert.
\end{equation*}

\end{rem}

\vskip0.4cm

From Lemma \ref{lema1} we can deduce a lower bound for the exotic pseudodistance in
 terms of the size functions of the size pairs of the product spaces.

\begin{thm} \label{teorema}
Let $(M,\varphi)$ and $(N,\psi)$ be two size pairs such that $\Ho(M,N)\neq \emptyset$. Then
$$d_{match}(\ell _{(M\times M, \Phi)},\ell _{(N\times N, \Psi)})\le \delta _{\La}((M,\varphi),(N,\psi)).$$
\end{thm}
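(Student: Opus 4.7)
The plan is to obtain the theorem as a straightforward chain of two inequalities: the classical lower bound for the natural pseudodistance in terms of the matching distance (Theorem \ref{desigualnatural}), applied to the product size pairs, composed with the inequality provided by Lemma \ref{lema1}. Nothing substantial is needed beyond verifying that the hypotheses of Theorem \ref{desigualnatural} are met by the product construction.

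First I would check that $(M\times M,\Phi)$ and $(N\times N,\Psi)$ are legitimate size pairs in the sense of Section 2. Since $M$ and $N$ are compact, connected and locally connected Hausdorff spaces, so are the products $M\times M$ and $N\times N$ (connectedness and local connectedness of finite products are standard, compactness is Tychonoff, Hausdorffness is clear). The functions $\Phi(p,q)=\varphi(p)-\varphi(q)$ and $\Psi(p,q)=\psi(p)-\psi(q)$ are continuous as differences of compositions of $\varphi$, $\psi$ with the (continuous) projections. Moreover, the hypothesis $\Ho(M,N)\neq\emptyset$ guarantees $\Ho(M\times M,N\times N)\neq\emptyset$: any $h\in\Ho(M,N)$ yields $(h,h)\in\Ho(M\times M,N\times N)$.

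Next I would apply Theorem \ref{desigualnatural} to the size pairs $(M\times M,\Phi)$ and $(N\times N,\Psi)$, obtaining
\[
d_{match}(\ell_{(M\times M,\Phi)},\ell_{(N\times N,\Psi)})\le \delta((M\times M,\Phi),(N\times N,\Psi)).
\]
Then Lemma \ref{lema1} gives $\delta((M\times M,\Phi),(N\times N,\Psi))\le\delta_{\Lambda}((M,\varphi),(N,\psi))$, and chaining the two inequalities delivers the desired bound.

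I do not anticipate any real obstacle: the content of the statement is precisely that the natural lower bound from matching distance for the product size pairs pulls back, via Lemma \ref{lema1}, to a lower bound for $\delta_\Lambda$. The only point worth mentioning explicitly is that the product construction stays within the category of size pairs, which is a routine verification. The intellectual work was already done in Lemma \ref{lema1} (showing that any homeomorphism $h$ achieving near-optimal $\delta_\Lambda$ gives rise through $(h,h)$ to a competitor for $\delta$ between the product pairs).
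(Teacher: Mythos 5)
Your proposal is correct and follows exactly the paper's own argument: apply Theorem \ref{desigualnatural} to the product size pairs $(M\times M,\Phi)$ and $(N\times N,\Psi)$ and then chain with Lemma \ref{lema1}. The extra verification that the product construction yields legitimate size pairs is a sensible addition the paper leaves implicit.
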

\begin{proof}
By applying Theorem \ref{desigualnatural} to the size pairs $(M\times M, \Phi)$ and $(N\times N, \Psi)$ we obtain
$$d_{match}(\ell _{(M\times M, \Phi)},\ell _{(N\times N, \Psi)})\le\delta ((M\times M, \Phi),(N\times N, \Psi)).$$

\noindent Lemma \ref{lema1} concludes our proof.

\end{proof}

\section{An example} \label{example}
Now we can show that the lower bound given in Theorem \ref{teorema} for $\delta _{\La}$ helps us to calculate this exotic pseudodistance. Moreover, it allows us to prove that this lower bound is sharp.

\begin{figure}[htbp]
 \centering
 \begin{minipage}[c]{.40\textwidth}
   \centering
  \includegraphics[width=4.5cm]{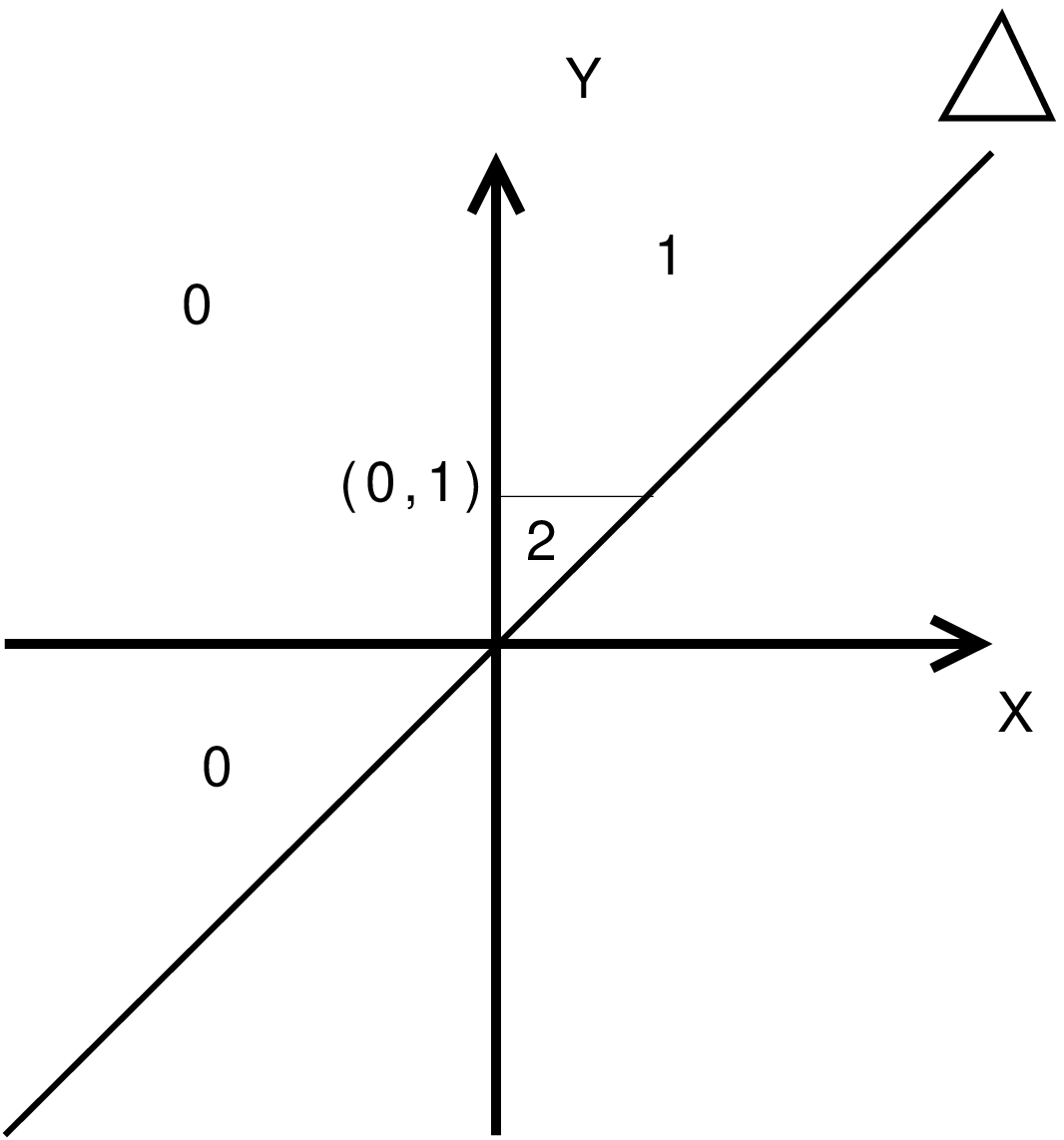}
  \caption{ Size function of $(I,\sin t)$}
\end{minipage}%
\hspace{10mm}%
\begin{minipage}[c]{.40\textwidth}
 \centering
 \includegraphics[width=4.5cm]{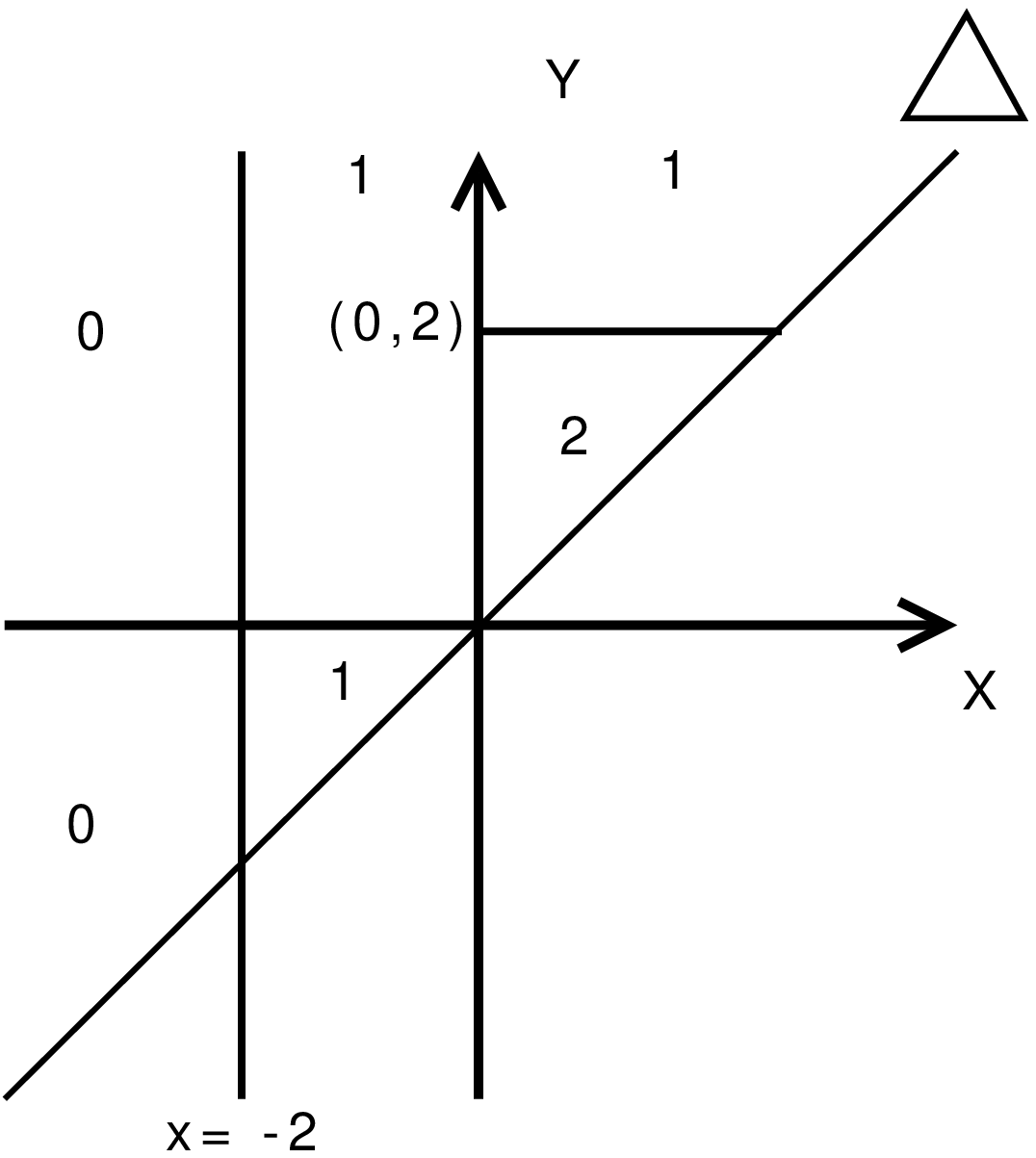}
 \caption{Size function of $(I,2\sin 2t)$}
 \end{minipage}
\end{figure}

\begin{figure}[htbp]
 \centering
  \begin{minipage}[c]{.45\textwidth}
    \rotatebox{270}{\includegraphics[width=9cm]{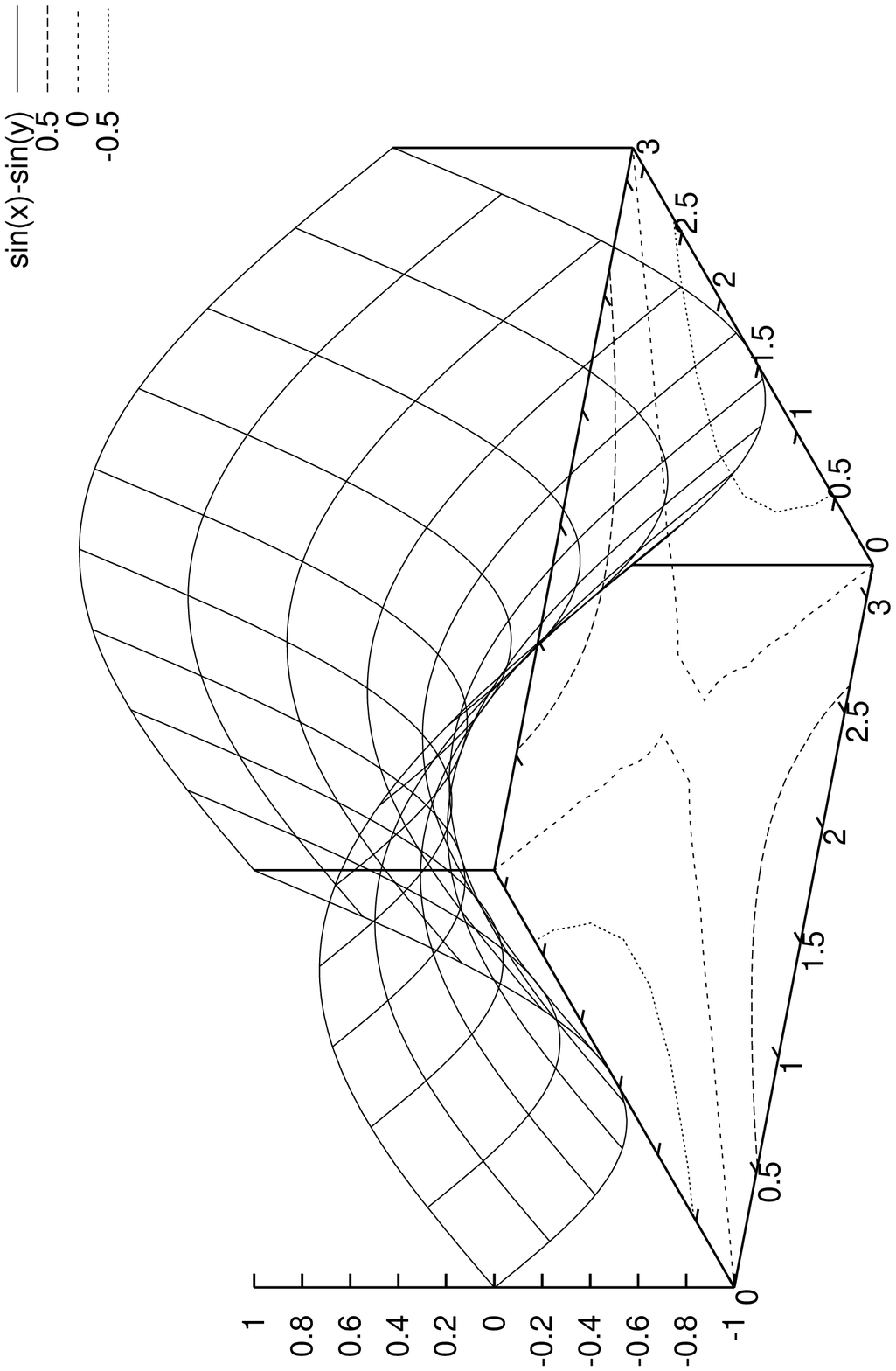}}
    \caption{$\lbrace(t,s,\sin t-\sin s):~s,t\in I\rbrace$}
  \end{minipage}

\centering
  \begin{minipage}[c]{.45\textwidth}
  \rotatebox{270}{\includegraphics[width=9cm]{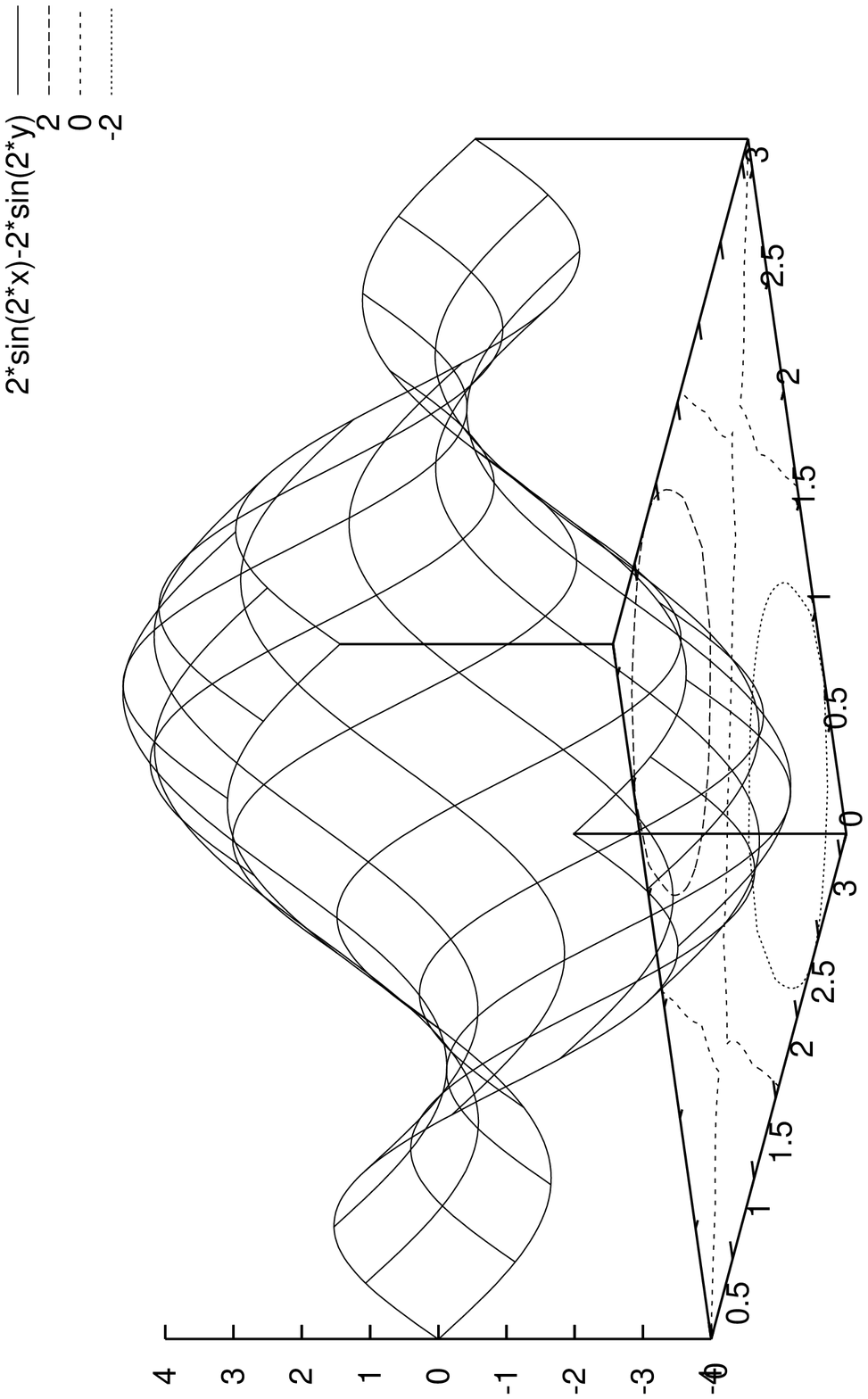}}
  \caption{$\lbrace(t,s,2\sin 2t-2\sin 2s):s,t\in I\rbrace$}
  \end{minipage}
\end{figure}

\noindent Let $(I,\varphi)$ and $(I,\psi)$ be two size pairs where $I=[0,\pi]$, and $\varphi :I\longrightarrow \mathbb{R}$ and $\psi :I\longrightarrow \mathbb{R}$ are defined as $\varphi (t)=\sin t$ and $\psi(t)=2\sin 2t$,  respectively.
\vskip .2cm
Let us calculate first the natural pseudodistance between the size pairs $(I,\sin t)$ and $(I,2\sin 2t)$. In this case we have $d_{match}(\ell_{(I,\varphi)},\ell _{(I,\psi)})=2$ (see Figures 1 and 2) and Theorem \ref{desigualnatural} provides us with a lower bound for the natural pseudodistance
$$2\leq\delta((I, \sin t),(I, 2\sin 2t)).$$

\noindent For every $\varepsilon > 0$, we can easily find an homeomorphism $g:I\longrightarrow I$ such that $\max _{t\in I}\vert \sin t-2\sin 2g(t)\vert\leq 2+\varepsilon$. So the natural pseudodistance between $(I,\sin t)$ and $(I,2\sin 2t)$ is $2$.

In the product space $I\times I$ we construct two measuring  functions as follows:
$$\Phi : I\times I\rightarrow \mathbb{R}\mbox{ defined as } \Phi (t,s)=\sin t-\sin s$$
\noindent and
$$\Psi : I\times I\rightarrow \mathbb{R}\mbox{ defined as } \Psi (t,s)=2\sin 2t-2\sin 2s.$$

Now we consider the size pairs $(I\times I, \Phi)$ and $(I\times I, \Psi)$ and compute the matching distance between the associated size functions. First, we calculate the size functions $\ell _{(I\times I, \Phi)}$ and $\ell _{(I\times I, \Psi)}$ with the help of Figure 3 and Figure 4.

\begin{figure}[htbp]
 \centering
 \begin{minipage}[c]{.45\textwidth}
    {\includegraphics[width=4.5cm]{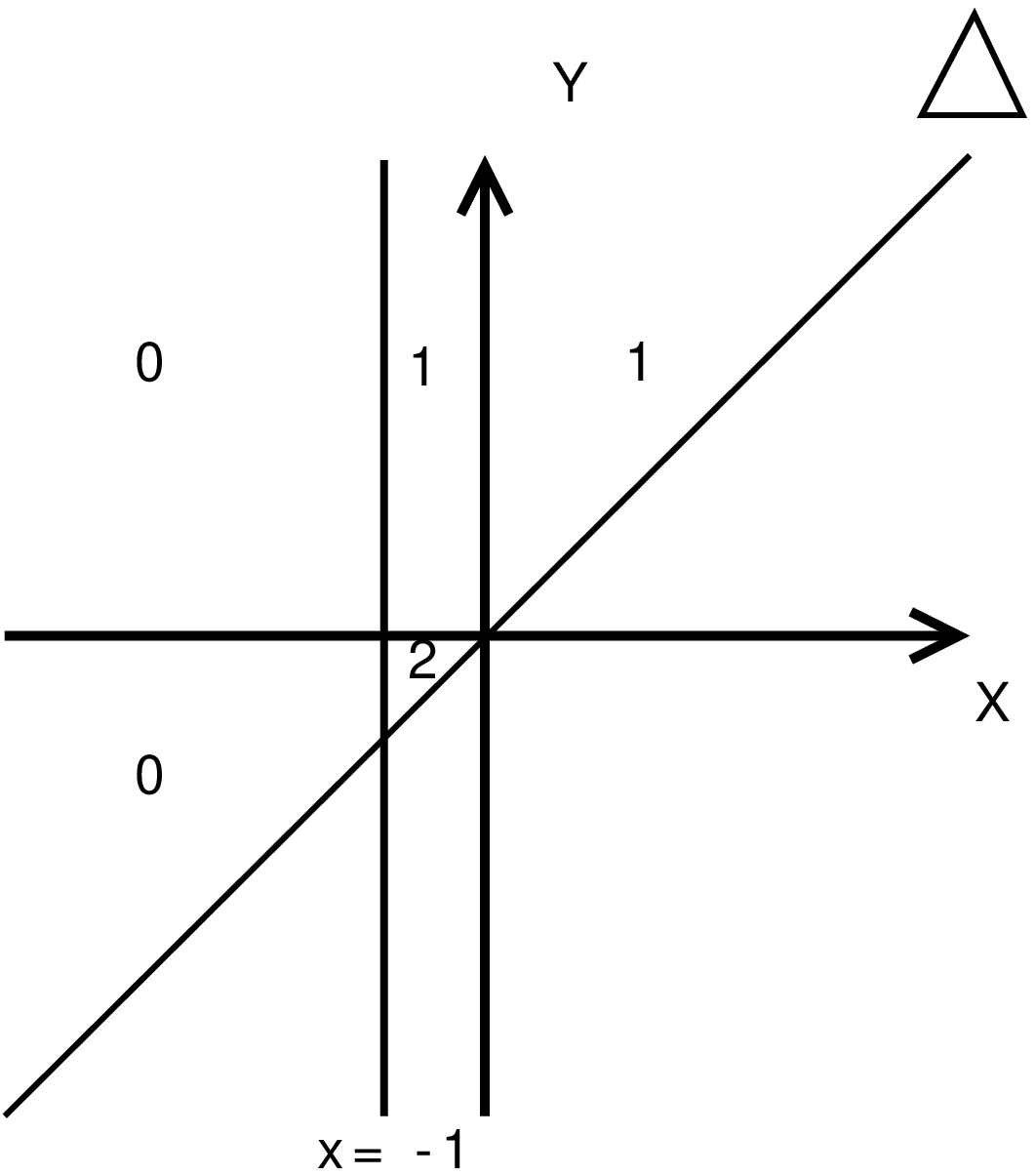}}
    \caption{Size function of $(I\times I,\Phi)$}
  \end{minipage}%
 \hspace{10mm}
  \begin{minipage}[c]{.45\textwidth}
    \centering
    {\includegraphics[width=4.5cm]{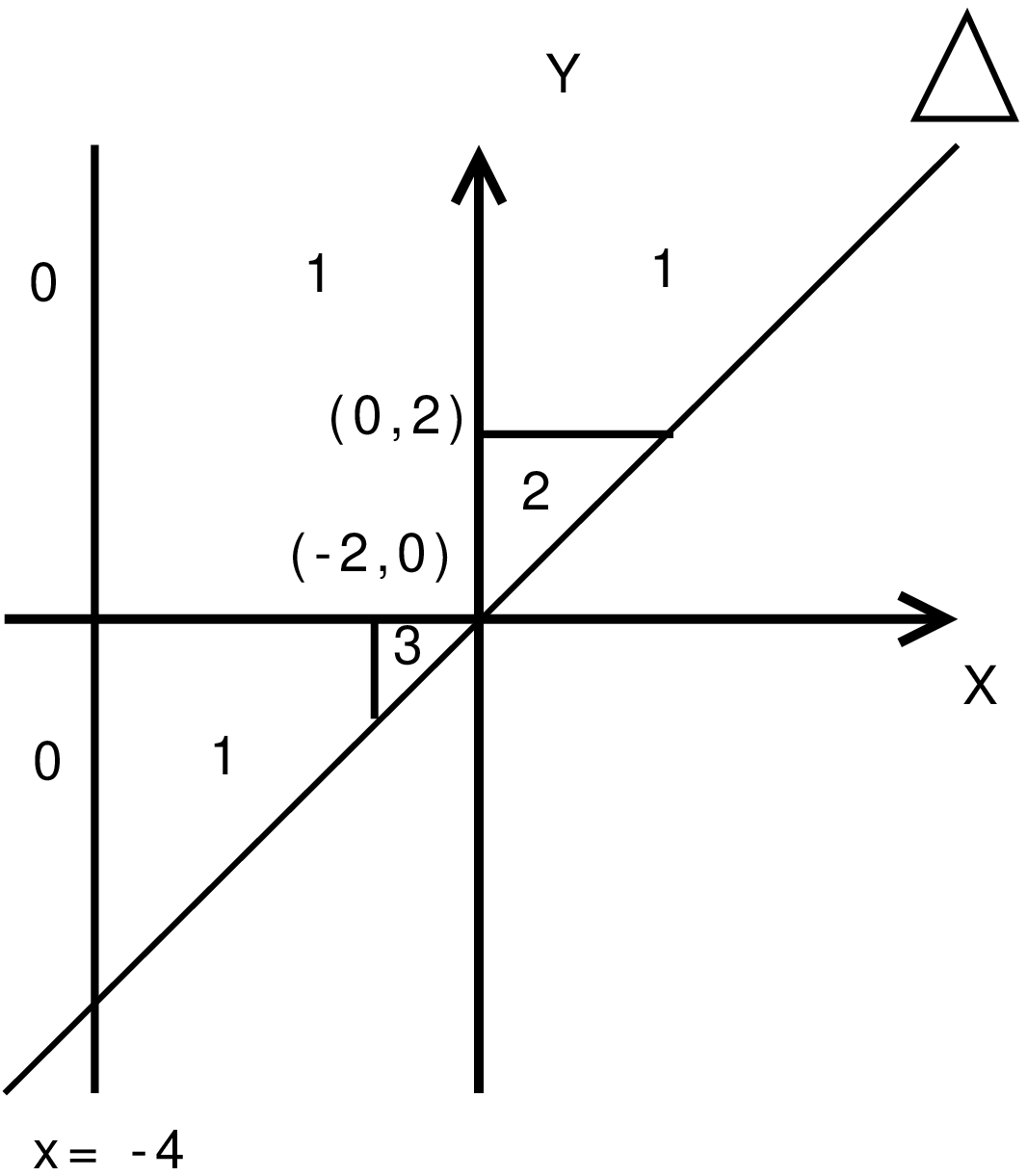}}
    \caption{Size function of $(I\times I,\Psi)$}
  \end{minipage}%
  \hspace{10mm}%
\end{figure}

Set $d_{m}=d_{match}(\ell _{(I\times I, \Phi)},\ell _{(I\times I, \Psi)})$. Figures 5 and 6 show that $d_{m}=3$.
\noindent Applying Theorem \ref{teorema} we have that $d_{m}$ is a lower bound for the exotic pseudodistance, $\delta_\La$, between $(I,\sin t)$ and $(I,2\sin 2t)$, i.e.
 $$3\leq \delta _{\La}((I,\varphi), (I,\psi)).$$

We can easily see that for every $\varepsilon >0$ a homeomorphism $g:I\longrightarrow I$ exists, such that $g$ takes the interval $[0,\pi - \varepsilon)$ to the interval $[0,\frac{\pi}{2})$ and the interval $[\pi - \varepsilon, \pi]$  to the interval  $[\frac{\pi}{2},\pi]$.

\noindent This homeomorphism  verifies the inequality
$$\max_{t\in I} (\sin t-2\sin 2g(t))-\min_{t\in I}(\sin t-2\sin 2g(t))= 3+\eta$$ with $\eta$ a positive real number depending on $\varepsilon$, such that $\lim_{\varepsilon\rightarrow 0}\eta=0$.\\ Hence $\delta _{\La}((I,\sin t), (I,2\sin 2t))\leq 3.$

Therefore by this last inequality and the one given by Theorem \ref{teorema} we have
$$\delta _{\La}((I,\sin t), (I,2\sin 2t))= 3.$$

\noindent This proves that the lower bound for $\delta _{\La}$ given by Theorem \ref{teorema} is sharp.

 \vskip 0.4cm
 $\mathbf{Acknowledgements.}$ I am very grateful to
Patrizio Frosini for his essential help, useful suggestions and
comments on the elaboration of this paper. Thanks also to Claudia Landi for the stimulating discussions.

\end{document}